\newcommand{\N}{\mathbf{N}} 
\newcommand{\R}{\mathbf{R}} 
\newcommand{\IHL}{U}
\newcommand{\RHL}{V}
\newcommand{\Prob}{\mathbf P}
\newcommand{\E}{\mathbf E}
\newcommand{\eps}{\varepsilon}
\newcommand{\dd}{\,\mathrm{d}}
\theoremstyle{plain} 
\newtheorem{theorem}{Theorem} 
\newtheorem{lemma}{Lemma} 
\theoremstyle{definition} 
\theoremstyle{remark} 
\begin{document}
\date{\today}
\title{Asymptotics of the persistence exponent of integrated fractional Brownian motion and fractionally integrated Brownian motion}
\author{Frank Aurzada\footnote{Technical University of Darmstadt, Schloßgartenstraße 7, 64289 Darmstadt, Germany. E-mail: aurzada@mathematik.tu-darmstadt.de, kilian@mathematik.tu-darmstadt.de} \and Martin Kilian\footnotemark[1]}
\maketitle

\begin{abstract}
We consider the persistence probability for the integrated fractional Brownian motion and the fractionally integrated Brownian motion with parameter $H,$ respectively. For the integrated fractional Brownian motion, we discuss a conjecture of Molchan and Khokhlov and determine the asymptotic behavior of the persistence exponent as $H\to 0$ and $H\to 1,$ which is in accordance with the conjecture. For the fractionally integrated Brownian motion, also called Riemann-Liouville process, we find the asymptotic behavior of the persistence exponent as $H\to 0$.
\end{abstract}

\noindent {\bf 2020 Mathematics Subject Classification:} 60G15; 60G22

\bigskip

\noindent {\bf Keywords:} Gaussian process; integrated fractional Brownian motion; persistence; one-sided exit problem; Riemann-Liouville process; stationary process; zero crossing

\section{Introduction and main results}
The area of persistence probabilities deals with properties of stochastic processes when they have long excursions, i.e., when they stay in some fixed subset of their image for an untypically long time. For real-valued processes, one usually considers the event that the process stays on a half-line. The simplest question is the persistence probability itself: For a self-similar process $(X_t)_{t\ge 0}$ one expects that
\begin{equation}\label{eq:pppolynomial}
\Prob{\left(X_t < 1 \,\forall t \in {[0,T]}\right)} = T^{-\theta +o(1)},\qquad T\to\infty,
\end{equation}
for some constant $\theta=\theta(X)\in(0,\infty),$ called {\it persistence exponent}, which is to be determined.

This type of problem originates in the theoretical physics literature, where the persistence exponent serves as a simple measure of how fast a complicated physical system returns from a disordered initial condition to its stationary state. The question has received quite some attention in recent years for various types of processes. We refer to \cite{majumdar} for an overview of the theoretical physics point of view and to \cite{aurzadasimon} for a survey of the mathematics literature.

The present paper deals with the persistence exponents of two related processes, namely the integrated fractional Brownian motion $I^H$ and the fractionally integrated Brownian motion $R^H,$ which we will define now.

For $H \in {(0,1)},$ let $B^H$ be a standard fractional Brownian motion (FBM), i.e., a centered Gaussian process with covariance
\begin{equation*}
\E[ B^H_t B^H_s ] = \frac{1}{2} \left( t^{2H} + s^{2H} - |t-s|^{2H} \right), \qquad t,s\geq 0.
\end{equation*}
The persistence exponent of FBM is known to be $\theta(B^H)=1-H,$ see \cite{molchan1999} (and \cite{aurzada2011,adgpp2017,agpp2018} for refinements). In the present paper, we deal with the (one-sided) integrated version of $B^H,$ which we call $I^H,$ i.e.,
\begin{equation*}
I_t^H:=\int_0^t B_s^H \dd s, \qquad t \ge 0.
\end{equation*}
The persistence exponent $\theta_I(H):=\theta(I_H)$ exists due to the fact that $I^H$ has nonnegative correlations. However, its value is unknown unless $H=1/2$. In this case, $B^{1/2}$ is a usual Brownian motion and $I^{1/2}$ is an integrated Brownian motion, and it could be shown via Markov techniques that $\theta_I(1/2)=1/4$ (cf. \cite{Goldman1971}, \cite{Sinai1992}, and \cite{IsozakiWatanabe}).

For the general case, Molchan and Khokhlov stated the following conjecture \cite{MolchanKhoklov2004}:
\begin{equation*}
\theta_I(H)=H (1-H).
\end{equation*}
This conjecture is surprising because of its symmetry, as it is clear that $B^H$ (and thus $I^H$) are very different processes for $H<1/2$ and for $H>1/2$. Further, in the sequence of papers \cite{MolchanKhoklov2004}, \cite{Molchan2008}, \cite{Molchan2012}, \cite{molchan2018}, the following properties of $\theta_I(H)$ were established: $\min(H,1-H)/2 \leq \theta_I(H)\leq \min(H,1-H)$ for all $H\in(0,1)$;  $\theta_I(1-H)\leq \theta_I(H)$ for $H<1/2$; and $\theta_I(H)\leq \max(1/4,\sqrt{(1-H^2)/12})$ for all $H \in (0,1)$.

The present paper determines the asymptotic behavior of $\theta_I(H)$ for $H\to 0$ and as $H\to 1$. This is our first main result. Here and elsewhere, $f(x)\sim g(x)$ stands for $\lim f(x)/g(x) = 1$.

\begin{theorem}\label{theo:molchankhoklov} The function $H\mapsto \theta_I(H)$ is continuous on $(0,1)$. Further, $\theta_I(H) \sim H$ as $H \to 0$ and $\theta_I(H) \sim 1-H$ as $H \to 1$.
\end{theorem}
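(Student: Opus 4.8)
The plan is to pass to a \emph{stationary} Gaussian process via the Lamperti transformation and then to track how its correlation function degenerates as $H$ approaches an endpoint. Since $I^H$ is $(1+H)$-self-similar, the process $Z^H_u:=e^{-(1+H)u}I^H_{e^u}$, $u\in\R$, is centered, stationary and Gaussian, with normalized correlation function $\rho_H(u)=\cov(Z^H_0,Z^H_u)/\V(Z^H_0)$ that is explicitly an elementary double integral of the FBM covariance, jointly continuous in $(H,u)$, and nonnegative (because $I^H$ has nonnegative correlations). A routine argument — the moving barrier $e^{-(1+H)u}$, decreasing from $1$ to $0$, may be replaced by the constant barrier $0$ without changing the exponential rate, and shifting a barrier by a constant does not change the persistence exponent of a stationary process — then identifies
\[
\theta_I(H)=-\lim_{S\to\infty}\frac1S\log\Prob\bigl(Z^H_u<0\ \ \forall u\in[0,S]\bigr),
\]
the persistence exponent of $Z^H$. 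Existence of this limit (hence of $\theta_I(H)$) comes from the Gaussian correlation inequality/FKG, which makes $S\mapsto-\log\Prob(Z^H_u<0\ \forall u\le S)$ subadditive so that the limit is an infimum; in particular $H\mapsto\theta_I(H)$ is automatically upper semicontinuous, and the known upper bound $\theta_I(H)\le\min(H,1-H)$ already gives one half of each asymptotic.

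The heart of the matter is a sharp description of $\rho_H$ near $H=0$ and $H=1$. Set $\mu_H:=\min(H,1-H)$. From the explicit formula I would derive, as $u\to\infty$, an estimate of the form $\rho_H(u)=c_H\,e^{-\mu_H u}(1+o(1))$ with $c_H\to 1$ as $H\to0$ and as $H\to1$, the fact that the limiting constant is exactly $1$ being a consequence of $\E[(I^H_1)^2]\to\tfrac12$ as $H\to0$ and $\E[(I^H_1)^2]\to\tfrac14$ as $H\to1$; this is to be combined with the complementary estimate $\rho_H(u)\to1$ uniformly for $u$ in compact sets as $H$ tends to an endpoint. Rescaling time by $\mu_H$, these two estimates give $\rho_H(v/\mu_H)\to e^{-v}$ for each $v>0$ (as $H\to0$, resp.\ as $H\to1$): the rescaled stationary process $u\mapsto Z^H_{u/\mu_H}$ has correlations converging to those of the standard Ornstein--Uhlenbeck process. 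Since $\theta_I(H)/\mu_H$ is exactly the persistence exponent of $u\mapsto Z^H_{u/\mu_H}$, and since the persistence exponent of the OU process with correlation $e^{-v}$ equals $1$ — a classical spectral computation: the killed OU generator on the half-line has top eigenvalue $-1$, attained by the first odd Hermite polynomial $x\mapsto x$ — the asymptotics $\theta_I(H)\sim H$ ($H\to0$) and $\theta_I(H)\sim 1-H$ ($H\to1$) would follow, \emph{granted} that convergence of correlation functions entails convergence of persistence exponents.

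That last implication is a robustness lemma for persistence exponents of stationary Gaussian processes under locally uniform convergence of (uniformly exponentially decaying) correlation functions, and it is exactly what also yields continuity of $H\mapsto\theta_I(H)$ on $(0,1)$, since $\rho_H\to\rho_{H_0}$ locally uniformly and $\mu_H$ stays bounded below near any interior $H_0$. Upper semicontinuity of the exponent is free, as noted above; for the matching lower semicontinuity I would use a \emph{gap} decomposition: cut $[0,S]$ into blocks separated by short gaps on which the process re-randomizes, so that $-\log\Prob(Z^H<0\text{ on }[0,S])$ is subadditive up to an additive per-block error controlled \emph{uniformly} in the parameter by the correlation decay, giving matching two-sided bounds. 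The quantitative comparison between $Z^H_{\cdot/\mu_H}$ and the OU process is then carried out with Slepian's inequality.

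I expect two main obstacles. The first is obtaining the correlation estimates with enough uniformity — one really needs $-\log\rho_H(u)=\mu_H u\,(1+o(1))$ in a form uniform over $u\in(0,\infty)$ as $H$ approaches the endpoint — compounded by the fact that near $u=0$ the smooth correlation behaves like $1-O(u^2)$, not like $e^{-\mu_H u}$, so a naive pointwise Slepian sandwich against OU correlations fails for tiny $u$; this forces one to discretize the time axis (replacing the continuous exit event by exit over a grid, at the price of sub-exponential factors) before comparing. The second is making the robustness/continuity lemma genuinely uniform near the endpoints — i.e.\ bounding the per-gap cost in the block decomposition uniformly in $H$ — which is precisely where the hypothesis that $\rho_H$ decays like $e^{-\mu_H u}$ with the \emph{same} rate $\mu_H$ enters.
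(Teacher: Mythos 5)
Your proposal follows essentially the same route as the paper's proof: Lamperti transform of $I^H$, identification of $\theta_I(H)$ with the persistence exponent of the resulting stationary Gaussian process, rescaling time by $\min(H,1-H)$, pointwise convergence of the rescaled correlation function to the Ornstein--Uhlenbeck correlation $\tau\mapsto e^{-\tau}$ (whose exponent is $1$), and a continuity result for persistence exponents under convergence of correlations, which also gives continuity of $\theta_I$ on $(0,1)$. The only substantive difference is that the robustness lemma you propose to establish by a block/gap decomposition is taken off the shelf in the paper (Lemma~\ref{lem:continuitypersistence}, from Dembo--Mukherjee and Aurzada--Mukherjee), whose technical hypotheses \eqref{eq:18}--\eqref{eq:20} the paper verifies by explicit binomial-series estimates on Molchan's formula for $\rho_H$ --- precisely the uniform correlation bounds you identify as your main obstacle.
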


The second result of our paper deals with fractionally integrated Brownian motion, also known as Riemann-Liouville processes. For $H>0,$ let
\begin{equation} \label{eq:defnrl}
R_t^H:=\int_0^t (t-s)^{H- \frac {1} {2}} \dd B_s, \qquad t \ge 0,
\end{equation}
be the Riemann-Liouville fractional integral of a Brownian motion $(B_t)_{t \ge 0}$. For $H=1/2,$ this is just the Brownian motion $(B_t)$. A Fubini argument shows that, for any integer $n\geq 0,$
\begin{equation*}
R^{n+1/2}_t = n! \int_0^t \ldots \int_0^{s_{n-1}} B_{s_n} \dd s_n \ldots \dd s_1 = n! \,(I^n B)_t,\qquad t\geq 0,
\end{equation*}
where $(I f)_t := \int_0^t f(s) \dd s$ is the simple integration operator. So, the process defined in \eqref{eq:defnrl} is indeed a fractionally integrated Brownian motion.

In the case $H \in (0,1),$ Riemann-Liouville processes are closely related to FBMs $B^H$ via the Mandelbrot-van Ness integral representation
\begin{align*}
\sigma_H B^H_t &= \int_0^t (t-s)^{H- \frac {1} {2}} \dd B_s + \int_{-\infty}^0 \left((t-s)^{H-\frac {1} {2}} - (-s)^{H-\frac {1} {2}}\right) \dd B_s 
\\&=: R^H_t + M^H_t,
\end{align*}
where $\sigma_H:=\Gamma(H+1/2) / \sqrt{2 H \sin(\pi H) \Gamma(2H)}$ (see e.g.\ \cite[Theorem~1.3.1]{Mishura2008}). Clearly, $R^H$ and $M^H$ are independent processes. 

The persistence exponent of the Riemann-Liouville process, $\theta_R(H):=\theta(R^H),$ exists due to the fact that the process has nonnegative correlations. In \cite{aurzadadereich}, it was shown that $\theta_R(H)$ is nonincreasing.

For $H\to\infty,$ the correlation function of the Lamperti transform of $R^H$ (see below for precise definitions) converges to the correlation function  $\tau\mapsto 1/\cosh(\tau/2),$ the corresponding process having persistence exponent $\theta_R(\infty)\in(0,\infty)$.  Now, a continuity theorem for persistence exponents (see \cite[Theorem~1.6]{DemboMukherjee1}, \cite[Lemma~3.1]{dembomukherjee2}, or \cite[Lemma~3.6]{aurzadamukherjee}; these results are summarized in Lemma~\ref{lem:continuitypersistence} below in a way suitable for our purposes) shows that $\theta_R(H)\to \theta_R(\infty)$ as $H\to \infty$. In \cite{poplavskischehr}, the exponent is derived to be $\theta_R(\infty)=3/16$.

The present paper is concerned with the behavior of $\theta_R(H)$ as $H\to 0$. We show that $\theta_R(H)$ tends to infinity as $H\to 0$ and that the asymptotic behavior is in the range $H^{-1}$ to $H^{-2}$. This is our second main result. 

\begin{theorem}\label{theo:riemannliouville} The function $H\mapsto \theta_R(H)$ is continuous on $(0,\infty)$. Further, 
\begin{itemize}
\item[(a)] $\liminf_{H\to 0} \theta_R(H) H> 0$ and
\item[(b)] $\theta_R(H) H^2 \le 14^2$ for $H \in (0,1/2)$.
\end{itemize}
\end{theorem}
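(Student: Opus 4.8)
Throughout, the natural framework is the Lamperti transformation: since $R^H$ is $H$‑self‑similar, the process $\tilde Z^H_\tau:=\sqrt{2H}\,e^{-H\tau}R^H_{e^\tau}$, $\tau\in\R$, is a centered, stationary Gaussian process with unit variance. A direct computation from $\E[R^H_tR^H_s]=\int_0^{s\wedge t}(t-u)^{H-1/2}(s-u)^{H-1/2}\dd u$ gives its correlation function explicitly,
\begin{equation*}
\rho_H(\Delta)=2H\,e^{-\Delta/2}\int_0^1(1-e^{-\Delta}v)^{H-\frac12}(1-v)^{H-\frac12}\dd v,\qquad\Delta\ge0,
\end{equation*}
which is positive, strictly decreasing, satisfies $\rho_H(\Delta)\le e^{-\Delta/2}$, and depends jointly continuously on $(\Delta,H)\in[0,\infty)\times(0,\infty)$. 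Under $t=e^\tau$ the event $\{R^H_t<1\}$ becomes $\{\tilde Z^H_\tau<\sqrt{2H}\,e^{-H\tau}\}$; the boundary is positive and tends to $0$, so (with an FKG argument to absorb the piece $t\in[0,1]$, using $\rho_H\ge0$) the persistence exponent of $R^H$ equals that of $\tilde Z^H$. Continuity of $H\mapsto\theta_R(H)$ on $(0,\infty)$ then follows from the continuity theorem for persistence exponents of stationary Gaussian processes (Lemma~\ref{lem:continuitypersistence}), since $\rho_H\to\rho_{H_0}$ locally uniformly and $\rho_H\le e^{-\Delta/2}$ is a uniform integrable bound.

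\textbf{Part (a).} The analytic input is that $\rho_H$ becomes integrable with a small constant as $H\to0$: by Tonelli and $(1-xv)^{H-1/2}\le(1-x)^{H-1/2}$ for $v\le1$ one gets $\int_0^\infty\rho_H(\Delta)\dd\Delta\le4\pi H$ for $H\in(0,1/2)$. Fix $T$, put $S=\log T$ and $b(\tau)=\sqrt{2H}\,e^{-H\tau}\le1$. After the Lamperti reduction,
\begin{equation*}
\Prob\big(R^H_t<1\ \forall\,t\in[0,T]\big)\le\Prob\big(\tilde Z^H_\tau\le b(\tau)\ \forall\,\tau\in[0,S]\big)=\Prob(G_S=0),
\end{equation*}
where $G_S:=\int_0^S(\tilde Z^H_\tau-b(\tau))^+\dd\tau\ge0$ is a $1$‑Lipschitz functional of the path for the $L^1([0,S])$‑norm. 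The Gaussian concentration inequality for Lipschitz functionals gives $\Prob(G_S<\E G_S-r)\le\exp(-r^2/(2\sigma_S^2))$ with weak variance $\sigma_S^2\le2S\int_0^\infty\rho_H\le8\pi HS$. Since $\E G_S=S\,\E[(\mathcal N-b)^+]\ge c_0S$ with $c_0>0$ independent of $H\le1/2$ (because $0\le b\le1$), taking $r=\E G_S$ yields $\Prob(R^H_t<1\ \forall\,t\in[0,T])\le T^{-c_0^2/(16\pi H)}$, hence $\theta_R(H)H\ge c_0^2/(16\pi)>0$.

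\textbf{Part (b).} First reduce to a unit interval: partitioning $[0,S]$ into unit blocks and replacing $\tilde Z^H$ by the process whose restrictions to these blocks are independent copies of $\tilde Z^H|_{[0,1]}$ only decreases covariances (as $\rho_H\ge0$), so Slepian's inequality gives $\Prob(\tilde Z^H<0\text{ on }[0,S])\ge\Prob(\tilde Z^H<0\text{ on }[0,1])^{\lceil S\rceil}$ and therefore $\theta_R(H)\le\log\Prob(\tilde Z^H<0\text{ on }[0,1])^{-1}$. To bound the latter from below, project the process on $[0,1]$ onto its average $\bar Z:=\int_0^1\tilde Z^H_\tau\dd\tau$: writing $\tilde Z^H_\tau=\frac{\psi(\tau)}{\bar v}\bar Z+Y_\tau$ with $\psi(\tau)=\cov(\tilde Z^H_\tau,\bar Z)=\int_0^1\rho_H(|\tau-\sigma|)\dd\sigma$, $\bar v=\V(\bar Z)>0$, and $Y\perp\bar Z$ Gaussian, one has $\psi(\tau)\ge\rho_H(1)$ and $\V(Y_\tau)\le1$, so that for any $w>0$
\begin{equation*}
\Prob(\tilde Z^H<0\text{ on }[0,1])\ \ge\ \Prob(\bar Z\le-w)\cdot\Prob\Big(\sup\nolimits_{\tau\in[0,1]}Y_\tau<\tfrac{\rho_H(1)}{\bar v}\,w\Big).
\end{equation*}
Two quantitative estimates control the right‑hand side: $\rho_H(1)\ge2e^{-1/2}H$ and $\bar v\le8\pi H$ (from the formula and Part (a)), and $\E[\sup_{[0,1]}\tilde Z^H]\le C_1/\sqrt H$, from Dudley's entropy bound together with the modulus estimate $1-\rho_H(\Delta)\le K\Delta^{2H}$ for $\Delta\in[0,1]$, which itself follows from $\E[(R^H_t-R^H_1)^2]\le(A_H+\tfrac1{2H})(t-1)^{2H}$ with $A_H:=\int_0^\infty[(1+y)^{H-1/2}-y^{H-1/2}]^2\dd y$ and $HA_H\le\tfrac94$. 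Since $Y$ has a smaller increment metric than $\tilde Z^H$ (so $\E[\sup Y]\le\E[\sup\tilde Z^H]$ by Sudakov–Fernique) and variance $\le1$, choosing $w\sim\bar v(\E[\sup_{[0,1]}\tilde Z^H]+O(1))/\rho_H(1)$ makes the second factor $\ge\tfrac12$ by Borell–TIS, while $\Prob(\bar Z\le-w)\ge c_2(w/\sqrt{\bar v})^{-1}e^{-w^2/(2\bar v)}$ with $w/\sqrt{\bar v}=O(1/H)$. Tracking the constants then gives $\log\Prob(\tilde Z^H<0\text{ on }[0,1])^{-1}\le14^2/H^2$ for $H\in(0,1/2)$, i.e.\ $\theta_R(H)H^2\le14^2$.

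\textbf{Main obstacle.} The conceptual steps are short; the real work is the \emph{quantitative, $H$‑uniform} control of $\rho_H$ near $\Delta=0$ in Part (b) — producing a usable constant in $1-\rho_H(\Delta)\le K\Delta^{2H}$ (the source of the "$14$") and carrying it, without losing too much, through the entropy estimate for $\E[\sup_{[0,1]}\tilde Z^H]$ and the Gaussian tail bounds. Part (a) is comparatively soft, and the continuity statement is routine once the explicit formula for $\rho_H$ is in hand.
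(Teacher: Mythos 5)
Your treatment of the continuity statement is essentially the paper's (Lemma~\ref{lem:continuitypersistence} applied to $r_H\to r_{H_0}$; note your $\rho_H$ is the paper's $r_H$). For part (a), however, you take a genuinely different and valid route. The paper rescales time by $\gamma\ll H^{-1}$, shows via Lemma~\ref{lem:correlationwithgamma} that $r_H(\tau/\gamma)\to 0$, checks \eqref{eq:18} through $\int_0^\infty r_H(\tau)\dd\tau = O(H)$, and invokes Lemma~\ref{lem:continuitypersistence}(b); you instead write $G_S=\int_0^S(\RHL^H_\tau-b(\tau))^+\dd\tau$ as a supremum of affine functionals, apply Borell--TIS with weak variance at most $2S\int_0^\infty r_H\le 8\pi HS$ (your bound $\int_0^\infty r_H(\Delta)\dd\Delta\le 4\pi H$ for $H<1/2$ is correct, via $B(1/2,H+1/2)\le\pi$), and conclude $\Prob(R^H_t<1\ \forall t\in[0,T])\le T^{-c_0^2/(16\pi H)}$. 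This is self-contained, uses only the easy direction of the Lamperti reduction, and even produces an explicit positive constant, whereas the paper's argument is soft; both are fine, and they rest on the same analytic fact $\int_0^\infty r_H=O(H)$.

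Part (b) is where there is a genuine gap. The content of (b) is not $\theta_R(H)=O(H^{-2})$ — which your block-Slepian reduction plus projection onto $\bar Z$ does plausibly yield — but the explicit bound $\theta_R(H)H^2\le 14^2$, and your sketch does not deliver that constant. Your final exponent is essentially $\frac{\bar v}{2\,r_H(1)^2}\bigl(\E[\sup_{[0,1]}\RHL^H]+O(1)\bigr)^2$; with your own inputs $\bar v\le 8\pi H$ and $r_H(1)\ge 2e^{-1/2}H$ this is at best $\frac{\pi e}{H}\bigl(\E[\sup_{[0,1]}\RHL^H]+O(1)\bigr)^2$, so reaching $196/H^2$ would require $\E[\sup_{[0,1]}\RHL^H]\lesssim 4.8/\sqrt H$. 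But the route you propose for this expectation — Dudley's entropy bound fed with the modulus $1-r_H(\Delta)\le K\Delta^{2H}$, where your $K$ (coming from $A_H+\tfrac1{2H}$ with $HA_H\le\tfrac94$) is of order ten — produces a constant several times larger, and hence a final bound an order of magnitude above $196/H^2$. So the sentence ``tracking the constants then gives $14^2/H^2$'' is precisely the missing proof: either you need a genuinely sharp chaining estimate (the true constant in $\E[\sup_{[0,1]}\RHL^H]\asymp H^{-1/2}$ is small, but generic Dudley constants will not show it), or you need an external input. The paper's mechanism is different and is what produces $14^2$ exactly: using Lemma~\ref{lem:correlationX2constanttau} and elementary Gamma-function estimates it proves the pointwise comparison $r_H(\tau)\ge(1-|\tau|^H)_+$, recognizes the right-hand side as the covariance of the fractional Slepian process $S^{H/2}$, applies Slepian's lemma to get $\theta_R(H)\le\theta_S(H/2)$, and imports Molchan's estimate for $\theta_S$ \cite[Proposition~2.9]{Molchan2012}, from which the constant $14^2$ is inherited. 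As written, your argument supports only $\limsup_{H\to 0}\theta_R(H)H^2<\infty$ (the statement of the first version of the paper), not part (b) as stated.
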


Figure~\ref{fig:plot} illustrates the behavior of the persistence exponents of Rie\-mann-Liouville process, (integrated) fractional Brownian motion, and (integrated) Brownian motion.

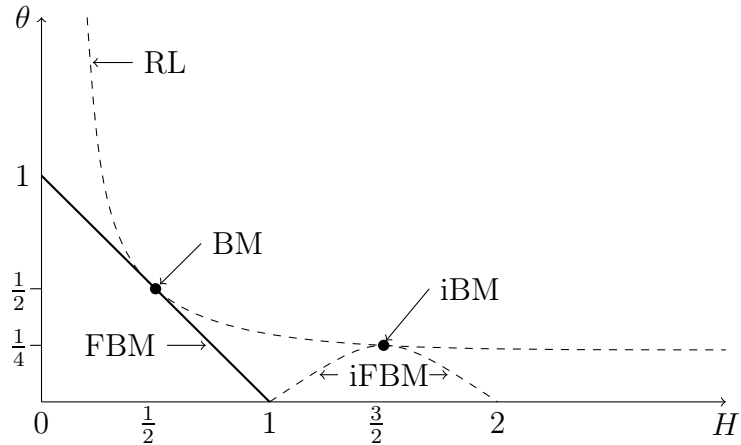
\begin{figure}
\centering
\begin{tikzpicture}[scale=3]
\draw[->] (0,0) -- (3,0) node[anchor=north] {$H$};
\draw[->] (0,0) -- (0,1.7) node[anchor=east] {$\theta$};
\draw	(0,0) node[anchor=north] {0}
		(1,0) node[anchor=north] {1}
		(2,0) node[anchor=north] {2}
		(0,1) node[anchor=east] {1}
		(-0.1,0.62) node[anchor=north] {$\frac{1}{2}$}
		(-0.05,0.5) -- (0,0.5)		
		(-0.1,0.37) node[anchor=north] {$\frac{1}{4}$}
		(-0.05,0.25) -- (0,0.25)
		(0.555,-0.1) node[anchor=east] {$\frac{1}{2}$}
		(1.555,-0.1) node[anchor=east] {$\frac{3}{2}$};

\draw [dashed] (0.2,1.7) .. controls (0.3,0.23) .. (3,0.23);
\draw[<-] (0.225,1.5) -- (0.4,1.5) node[anchor=west] {RL};

\draw [dashed] (1,0) .. controls (1.5,0.333) .. (2,0);
\draw[<-] (1.51,0.27) -- (1.7,0.5) node[anchor=west] {iBM};

\node at (0.5,0.5)[circle,fill,inner sep=1.5pt]{};
\draw[<-]  (0.52,0.52) -- (0.7,0.7) node[anchor=west] {BM};

\node at (1.5,0.25)[circle,fill,inner sep=1.5pt]{};
\draw[<-]  (1.22,0.120) -- (1.30,0.120) node[anchor=west] {iFBM};
\draw[->]  (1.70,0.120) -- (1.78,0.120);


\draw[thick] (0,1) -- (1,0);
\draw[<-]  (0.72,0.250) --  (0.550,0.25) node[anchor=east] {FBM};
\end{tikzpicture}
\caption{Relation of the persistence exponents of Riemann-Liouville process (RL), fractional Brownian motion (FBM), Brownian motion (BM), integrated Brownian motion (iBM), and integrated FBM (iFBM). For iFBM with parameter $H\in(0,1),$ we shift the function by $1$ because $H$-iFBM corresponds to $(H+1)$-RL.}
\label{fig:plot}
\end{figure}

The study of the persistence probabilities of FBM, iFBM, and related processes has received considerable attention in theoretical physics and mathematics. For instance, see \cite{MolchanKhoklov2004} and \cite{molchan2017}, where the Hausdorff dimension of Lagrangian regular points for the inviscid Burgers equation with FBM initial velocity is related to the two-sided persistence problem of integrated FBM; the interest for it arises from \cite{SheAurellFrisch1992} and \cite{Sinai1992}.

The rest of the paper is organized as follows. In the next subsection, we transform the problem for $I^H$ and $R^H$ into a persistence problem for Gaussian stationary processes (GSP) and sketch the proof technique. Section~\ref{sec:proofsifbm} contains the proofs related to Theorem~\ref{theo:molchankhoklov}, while Section~\ref{sec:proofsrl} is devoted to the proofs related to Theorem~\ref{theo:riemannliouville}.

\subsection*{Tranformation to GSP; ideas of the proofs}

The first step in our proofs is to transform the involved self-similar processes  -- $I^H$ and $R^H,$ which are $(H+1)$-self-similar and $H$-self-similar, respectively -- into GSPs via an exponential time change, also called Lamperti transform.

More generally, for an $H$-self-similar process $(X_t)_{t\ge 0},$ we consider its Lamperti tranform $Z_\tau:=e^{-H \tau} X_{e^{\tau}}, \ \tau\in\R$. It can often be shown that \eqref{eq:pppolynomial} turns into
\begin{equation} \label{eq:ppexponential}
\Prob{\left(Z_\tau < 0 \,\forall \tau \in {[0,T]}\right)} = e^{-T(\theta +o(1))},\qquad T\to\infty,
\end{equation}
where $\theta=\theta(X)$ is the same exponent as in \eqref{eq:pppolynomial}.

Consequently, we consider the Lamperti transform of $I^H$ defined by
\begin{equation*}
\IHL_{\tau}^H := \sqrt{2(1+H)} \,e^{-(1+H) \tau} \,I_{e^{\tau}}^H, \qquad \tau \in \R,
\end{equation*}
where the normalization constant is given in order to have a unit variance process.

Similarly, we consider the normalized Lamperti transform of $R^H$ defined by
\begin{equation*}
\RHL_{\tau}^H:=\sqrt{2H} \,e^{-\tau H} R_{e^\tau}^H, \qquad \tau \in \R.
\end{equation*}

The basic idea of our proofs is as follows.  The first step is to show that \eqref{eq:pppolynomial}  for $I^H$ ($R^H,$ respectively) is indeed the same as \eqref{eq:ppexponential} for $\IHL^H$ ($\RHL^H,$ respectively). This is a standard argument, where we follow \cite[Proposition~1.6]{aurzadadereich} or \cite[Theorem~1]{Molchan2008}.

The second step to prove Theorem~\ref{theo:molchankhoklov} (Theorem~\ref{theo:riemannliouville} is proved similarly, but the argument is much more technical) is to consider the GSP $(\IHL_{\tau/H}^H)_{\tau \in \R}$ as $H\to 0$ and the GSP  $(\IHL_{\tau/(1-H)}^H)_{\tau \in \R}$ as $H\to 1$. Their persistence exponents are given by $\theta_I(H)/H$ and $\theta_I(H)/(1-H),$ respectively, as a quick computation shows. We will show that in both of these cases, the respective correlation function of that GSP tends to the correlation function $\tau\mapsto e^{-\tau},$ which is the correlation function of an Ornstein-Uhlenbeck process, which has persistence exponent $1$. Then, we use the following lemma, which is Lemma~3.6 in \cite{aurzadamukherjee} together with Remark~3.8 in \cite{aurzadamukherjee} and Theorem~1.6 in \cite{DemboMukherjee1} as well as Lemma~3.10 in \cite{aurzadamukherjee}, to conclude the convergence of the persistence exponents $\theta_I(H)/H\to 1$ as $H\to 0$ and, respectively, $\theta_I(H)/(1-H)\to 1$ as $H\to 1$. 
\begin{lemma}\label{lem:continuitypersistence}
For $k \in \N,$ let $(Z_\tau^{(k)})_{\tau \ge 0}$ be a centered GSP with nonnegative correlation function $A_k(\tau), \,\tau \ge 0,$  satisfying $A_k(0)=1$. Suppose that $A_k(\tau) \to A(\tau)$ for $k \to \infty$ and all $\tau \ge 0,$ where $A\colon [0,\infty) \to [0,1]$ is the correlation function of a centered GSP $(Z_\tau)_{\tau \ge 0}$. 
\begin{itemize}
\item[(a)] If $Z^{(k)}$ and $Z$ have continuous sample paths and the conditions
\begin{align}
&\lim_{L \to \infty} \limsup_{k \to \infty} \sum_{\tau=L}^{\infty} A_k{\left(\frac {\tau} {\ell}\right)}=0 \text{ for every } \ell \in \N, \label{eq:18}
\\&\limsup_{\eps \downarrow 0} \left|\ln \eps\right|^{\eta} \sup_{k \in \N, \,\tau \in [0,\eps]} \left(1-A_k(\tau)\right) < \infty \text{ for some } \eta > 1, \label{eq:19}
\\&\limsup_{\tau \to \infty} \frac {\ln A(\tau)} {\ln \tau} < -1 \label{eq:20}
\end{align}
are fulfilled, then
\begin{align}\label{eq:convergenceofpes}
\lim_{k, T \to \infty} \frac {1}{T} \ln \Prob{\left(Z_{\tau}^{(k)} < 0 \,\forall \tau \in {[0,T]}\right)} = \lim_{T \to \infty} \frac {1}{T} \ln \Prob{\left(Z_{\tau} < 0 \,\forall \tau \in {[0,T]}\right)}. 
\end{align}
\item[(b)] If $A(\tau)=0$ for all $\tau > 0$ and \eqref{eq:18} is fulfilled, then
\begin{equation*}
-\lim_{k, T \to \infty} \frac {1}{T} \ln \Prob{\left(Z_{\tau}^{(k)} < 0 \,\forall \tau \in {[0,T]}\right)} = \infty.
\end{equation*}
\end{itemize}
\end{lemma}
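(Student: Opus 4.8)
The statement assembles several known continuity results for persistence exponents, and the plan is to follow the standard two–step \emph{discretisation} scheme behind \cite{DemboMukherjee1} and \cite{aurzadamukherjee}: first pass from the continuous–time exponent to the exponents of the processes sampled on a lattice $\tfrac1\ell\Z$, and then exploit convergence of the finite–dimensional correlations together with the tail condition \eqref{eq:18} to obtain continuity of the \emph{discrete}–time exponents. Throughout, write $p_k(T)=\Prob(Z^{(k)}_\tau<0\ \forall\,\tau\in[0,T])$, let $p(T)$ be the analogue for $Z$, and set $p^{(\ell)}_k(n)=\Prob(Z^{(k)}_{j/\ell}<0,\ j=0,\dots,n)$ with $p^{(\ell)}(n)$ the analogue for $Z$. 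Since all correlation functions are nonnegative, Pitt's positive–association property for Gaussian vectors makes the events on disjoint time intervals positively correlated, so $-\log p_k$, $-\log p_k^{(\ell)}$ and their $Z$–counterparts are subadditive; Fekete's lemma then guarantees the exponents $\theta_k$, $\theta_k^{(\ell)}$, $\theta$, $\theta^{(\ell)}$ all exist, and \eqref{eq:convergenceofpes} is the assertion $\theta_k\to\theta$.

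\emph{Step 1 (discretisation).} Sampling on a lattice only weakens the one–sided constraint, so $p_k(T)\le p_k^{(\ell)}(\lfloor\ell T\rfloor)$ and hence $\theta_k\ge\ell\,\theta_k^{(\ell)}$, and likewise $\theta\ge\ell\,\theta^{(\ell)}$. For the reverse inequality up to a vanishing error I would force all lattice values below $-\delta$ and all oscillations of $Z^{(k)}$ on the intervals $[\tfrac{j}{\ell},\tfrac{j+1}{\ell}]$ to be smaller than $\delta$: a Slepian comparison absorbs the shift by $\delta$, and condition \eqref{eq:19}, a uniform Fernique–type bound on the modulus of continuity of the family $(Z^{(k)})_k$, bounds the probability of a large oscillation on one interval by a quantity that is summable over the $\approx\ell T$ intervals, \emph{uniformly in $k$}. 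This should give $\theta_k\le\ell\,\theta_k^{(\ell)}+\eps_\ell$ with $\eps_\ell\to0$ independent of $k$, and the same for $Z$, whose correlation $A$ inherits \eqref{eq:19} from the $A_k$ because $1-A(\tau)\le\sup_k(1-A_k(\tau))$. Together with condition \eqref{eq:20} — which yields summable correlations on every lattice, hence $\theta\in(0,\infty)$ — this makes $\ell\,\theta^{(\ell)}\to\theta$ without a gap (the role of Lemma~3.10 in \cite{aurzadamukherjee}), so Step~1 reduces everything to the behaviour of $\theta_k^{(\ell)}$ for fixed $\ell$.

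\emph{Step 2 (continuity of the discrete exponent).} Fix $\ell$. Now $\theta_k^{(\ell)}$ depends on $Z^{(k)}$ only through the correlation sequence $(A_k(j/\ell))_{j\ge0}$, which converges to $(A(j/\ell))_{j\ge0}$. I would invoke the continuity theorem for persistence exponents of discrete–time GSPs (Theorem~1.6 in \cite{DemboMukherjee1}, respectively Lemma~3.6 with Remark~3.8 in \cite{aurzadamukherjee}): its proof compares the GSP with its $m$–dependent truncations via Slepian's inequality and controls the truncation error by the tail of the correlations, which \eqref{eq:18} makes uniformly small in $k$; hence $\theta_k^{(\ell)}\to\theta^{(\ell)}$ as $k\to\infty$.

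\emph{Conclusion.} For part (a), Step~1 gives $\ell\,\theta_k^{(\ell)}\le\theta_k\le\ell\,\theta_k^{(\ell)}+\eps_\ell$; sending $k\to\infty$ and using Step~2 yields $\ell\,\theta^{(\ell)}\le\liminf_k\theta_k\le\limsup_k\theta_k\le\ell\,\theta^{(\ell)}+\eps_\ell$, and letting $\ell\to\infty$ gives $\theta_k\to\theta$. For part (b), if $A\equiv0$ on $(0,\infty)$ then on every lattice the coordinates of $Z$ are i.i.d.\ standard Gaussians, so $\theta^{(\ell)}=\log2$; Step~2, which uses only \eqref{eq:18}, then gives $\liminf_k\theta_k\ge\ell\,\liminf_k\theta_k^{(\ell)}=\ell\log2$ for every $\ell$, whence $\theta_k\to\infty$. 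I expect the main obstacle to be the upper bound in Step~1: converting the single logarithmic–modulus estimate \eqref{eq:19} into a genuinely $k$–uniform control of the negative excursions of all the processes $Z^{(k)}$ between consecutive lattice points; once that is available, the remainder is bookkeeping with Slepian's inequality and Fekete's lemma.
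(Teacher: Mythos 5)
The paper itself does not prove this lemma: it is stated explicitly as a repackaging of Lemma~3.6, Remark~3.8 and Lemma~3.10 of \cite{aurzadamukherjee} together with Theorem~1.6 of \cite{DemboMukherjee1}, so there is no in-paper argument to compare with. Your outline --- existence of all exponents by nonnegative correlations, Slepian and subadditivity; reduction to lattice skeletons; continuity of the discrete exponents via the $m$-dependent truncation/Slepian scheme under the uniform tail condition \eqref{eq:18}; identification $\ell\,\theta^{(\ell)}\to\theta$ using \eqref{eq:19}--\eqref{eq:20}; and, for part (b), the lower bound $\theta_k\ge\ell\,\theta_k^{(\ell)}\to\ell\ln 2$ for every $\ell$ --- is exactly the architecture of those cited proofs, so in spirit you are reconstructing the right argument rather than finding a new one.

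The one step you argue yourself, however, is the step that does not work as written: the uniform-in-$k$ bound $\theta_k\le\ell\,\theta_k^{(\ell)}+\eps_\ell$ in Step~1. You propose to lower bound $p_k(T)$ by forcing the lattice values below $-\delta$ and the oscillations below $\delta$, and to control the oscillation failure by a union bound "summable over the $\approx\ell T$ intervals." But the oscillation-failure probability obtained from \eqref{eq:19} via Fernique/Borell is only small \emph{per block} (small in $\ell$, uniformly in $k$), not small in $T$; the union bound therefore produces an error of order $\ell T\,q_\ell$, which is polynomially large in $T$, while the lattice event $\{Z^{(k)}_{j/\ell}<-\delta\ \forall j\le \ell T\}$ has probability decaying exponentially in $T$. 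Subtracting the former from the latter gives nothing for large $T$, so the claimed inequality does not follow from this bookkeeping. One needs the genuinely finer per-block mechanism of the cited results (e.g.\ the argument behind Lemma~3.10 of \cite{aurzadamukherjee} and the continuous-time reductions in \cite{DemboMukherjee1}, where the oscillation control is built into the block events themselves, together with a shifted-barrier comparison whose effect on the exponent is shown to vanish), rather than a global subtraction. In addition, the "Slepian comparison absorbs the shift by $\delta$" step needs justification: the events $\{Z_{j/\ell}<-\delta\}$ versus $\{Z_{j/\ell}<0\}$ are not related by a Slepian-type correlation comparison, and showing that the level shift does not change the exponent is itself a nontrivial ingredient of those references. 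So your proposal is a faithful sketch of the literature's route, but the discretisation upper bound --- which you correctly identify as the main obstacle --- remains a genuine gap rather than a completed step.
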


The lemma says that if the correlation functions of the processes $Z^{(k)}$ converge pointwise to the correlation function of the process $Z$ and the technical conditions \eqref{eq:18}--\eqref{eq:20} are satisfied, then the persistence exponents of the  processes $Z^{(k)}$ converge to the persistence exponent of the process $Z$. Here, the existence of the persistence exponents, i.e., the existence of the (negative) limits in \eqref{eq:convergenceofpes}, follows from nonnegative correlations, Slepian's lemma, and subadditivity.

\section{Proofs for the case of integrated FBM}\label{sec:proofsifbm}
In this section, we prove Theorem \ref{theo:molchankhoklov}. We start with a lemma giving important properties of the correlation function of $\IHL^H$. In \cite[Lemma~2]{Molchan2008}, the correlation function $\rho_H(\tau)=\E{\left[\IHL_0^H \IHL_{\tau}^H\right]}$ was found to be
\begin{equation*}
\rho_H(\tau)=\frac {(1+H) \left(e^{-H \tau}+ e^{H \tau}\right)} {1+2H} + \frac {\left(e^{\tau/2} - e^{-\tau/2}\right)^{2 \,(1+H)}} {2 \,(1+2H)} - \frac {e^{(1+H) \tau} + e^{-(1+H) \tau}} {2 \,(1+2H)}
\end{equation*}
and shown to be nonincreasing on $(0,\infty).$ We show the following asymptotics. 

\begin{lemma}\label{lem:correlationX}
For all $\tau \ge 0,$
\begin{equation*}
\lim_{H \to 0} \rho_H{\left(\frac {\tau} {H}\right)} = \lim_{H \to 1} \rho_H{\left(\frac {\tau} {1-H}\right)} = e^{-\tau}.
\end{equation*}
\end{lemma}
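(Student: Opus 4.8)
The plan is to handle the two limits separately, in each case substituting the appropriate time-scaling into Molchan's explicit formula for $\rho_H$ and performing a careful asymptotic expansion of each of the three summands.

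For the limit $H\to 0$: I would substitute $\tau\mapsto\tau/H$ into
\[
\rho_H{\left(\frac\tau H\right)}=\frac{(1+H)\left(e^{-\tau}+e^{\tau}\right)}{1+2H}+\frac{\left(e^{\tau/(2H)}-e^{-\tau/(2H)}\right)^{2(1+H)}}{2(1+2H)}-\frac{e^{(1+H)\tau/H}+e^{-(1+H)\tau/H}}{2(1+2H)}.
\]
The first term clearly tends to $e^{-\tau}+e^{\tau}$ since $(1+H)/(1+2H)\to 1$. The delicate point is that the second and third terms each blow up like $e^{\tau/H}$ (up to polynomial-in-$H$ corrections in the exponent), so one must show they \emph{cancel} to leading order, leaving only a contribution of $-e^\tau$ in the limit. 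To do this I would write the second term as $\frac{1}{2(1+2H)}e^{(1+H)\tau/H}\left(1-e^{-\tau/H}\right)^{2(1+H)}$ and the third as $\frac{1}{2(1+2H)}e^{(1+H)\tau/H}\left(1+e^{-2\tau(1+H)/H}\right)$, pull out the common factor $\frac{1}{2(1+2H)}e^{(1+H)\tau/H}$, and expand the bracketed difference $\left(1-e^{-\tau/H}\right)^{2(1+H)}-\left(1+e^{-2\tau(1+H)/H}\right)$ using $\log(1-x)=-x+O(x^2)$: the bracket equals $-2(1+H)e^{-\tau/H}+O(e^{-2\tau/H})$. Multiplying back, the $e^{(1+H)\tau/H}\cdot e^{-\tau/H}=e^{\tau}$ factor survives and the prefactor $\frac{2(1+H)}{2(1+2H)}\to 1$, so the combined second-and-third terms converge to $-e^\tau$. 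Summing gives $e^{-\tau}+e^\tau-e^\tau=e^{-\tau}$, as claimed. (Some care is needed at $\tau=0$, where all three terms are finite and the identity $\rho_H(0)=1$ holds trivially.)

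For the limit $H\to 1$: substituting $\tau\mapsto\tau/(1-H)$ and writing $\delta:=1-H\to 0$, the parameter $H=1-\delta$ stays bounded away from $0$, so $1+2H\to 3$ and $1+H\to 2$ are harmless constants. The first term becomes $\frac{2-\delta}{3-2\delta}(e^{-(1-\delta)\tau/\delta}+e^{(1-\delta)\tau/\delta})$, which tends to $0$ because $e^{-(1-\delta)\tau/\delta}\to 0$ and the positive exponential is cancelled against the third term exactly as before. Indeed the same factoring works: the dominant exponential in the second and third terms is now $e^{(2-\delta)\tau/(2\delta)}=e^{(1+H)\tau/(2(1-H))}$, common to both (note $(1+H)/2 = 1-\delta/2$ so this is $e^{(1-\delta/2)\tau/\delta}$), and the bracket $\left(1-e^{-\tau/\delta}\right)^{2(1+H)}-\left(1+e^{-(1+H)\tau/\delta}\right)$ is again $\approx -2(1+H)e^{-\tau/\delta}$ to leading order. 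Here though one must also track the first term's positive exponential $e^{(1-\delta)\tau/\delta}=e^{\tau/\delta-\tau}$, which has to combine with part of the second-and-third-term expansion; collecting all surviving $O(1)$ pieces one checks the total limit is $e^{-\tau}$. I expect this $H\to 1$ case to require slightly more bookkeeping than $H\to 0$ because three different exponential rates ($e^{\tau/\delta}$ from the first term, $e^{(1-\delta/2)\tau/\delta}$ squared-down from the latter two) must be reconciled, but the mechanism — dominant exponentials cancel, first subleading correction produces $e^{-\tau}$ — is identical.

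The main obstacle is purely the bookkeeping in the cancellation of the divergent exponentials: one is subtracting two quantities each of order $e^{c\tau/H}$ (or $e^{c\tau/\delta}$) to extract an $O(1)$ limit, so the expansions of $\left(1-e^{-\tau/H}\right)^{2(1+H)}$ and of the exponentials in the exponents themselves (e.g.\ $(1+H)\tau/H=\tau/H+\tau$) must be carried to sufficient order and uniformly enough in $\tau$ on compacts. I would organize this by fixing $\tau>0$, treating it as a constant, and writing everything as $(\text{explicit }O(1)\text{ prefactor depending on }H)\times(1+o(1))$; dominated convergence is not needed since the convergence is just pointwise in $\tau$. A sanity check at the end: the limit function $e^{-\tau}$ is indeed a valid correlation function (Ornstein--Uhlenbeck), consistent with the discussion preceding the lemma.
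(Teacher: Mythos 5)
Your $H\to 0$ argument is correct and is essentially the paper's own proof: pull out the factor $e^{(1+H)\tau/H}$, use the first-order expansion $(1-x)^{2(1+H)}-1\sim -2(1+H)x$, and note that the neglected error multiplied by the prefactor, $e^{(1+H)\tau/H}\,O(e^{-2\tau/H})=O(e^{(H-1)\tau/H})$, vanishes for fixed $\tau>0$ (and $\tau=0$ is trivial since $\rho_H(0)=1$).

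The $H\to 1$ case, however, has a genuine gap. Writing $\delta=1-H$, the factor pulled out of the second and third summands is $e^{(1+H)\tau/\delta}$, not $e^{(1+H)\tau/(2\delta)}$ as you state; and against this factor a first-order expansion of the bracket no longer determines the limit. Indeed, the linear term $-2(1+H)e^{-\tau/\delta}$ of the bracket yields $-\frac{1+H}{1+2H}e^{H\tau/\delta}$, which cancels the first summand's divergent piece $\frac{1+H}{1+2H}e^{H\tau/\delta}$ \emph{exactly}, so nothing of order one survives from that interplay --- contrary to your suggestion that the first term's $e^{\tau/\delta-\tau}$ combines with the expansion to produce the answer. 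Meanwhile the error you discard, $O(e^{-2\tau/\delta})$, becomes $e^{(1+H)\tau/\delta}\,O(e^{-2\tau/\delta})=O(e^{-\tau})$ after multiplication by the prefactor, i.e.\ it is of exactly the same order as the claimed limit, so ``collecting all surviving $O(1)$ pieces'' cannot be done at the precision of your expansion. The limit in fact comes from the quadratic term of the binomial expansion: using $(1-x)^{a}-1+ax\sim\frac{a(a-1)}{2}x^{2}$ as $x\to 0$ (uniformly for $a$ in compacts), with $a=2(1+H)$ and $x=e^{-\tau/\delta}$, the surviving contribution is
\begin{equation*}
\frac{(1+H)(1+2H)}{2\,(1+2H)}\;e^{(1+H)\tau/\delta}\,e^{-2\tau/\delta}=\frac{1+H}{2}\,e^{-\tau}\longrightarrow e^{-\tau},
\end{equation*}
which is precisely how the paper argues. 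So the mechanism in the two regimes is not ``identical'': for $H\to 1$ you must expand one order further than for $H\to 0$.
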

\begin{proof}
For $H \to 0,$ we have
\begin{equation*}
\rho_H{\left(\frac {\tau} {H}\right)}=e^{-\tau} + e^{\tau} + \frac {e^{(1+H) \tau/H} \left((1 - e^{-\tau/H})^{2 \,(1+H)} - 1\right)} {2 \,(1+2H)} + o(1) \to e^{-\tau},
\end{equation*}
where we have used $(1-x)^a - 1 \sim -ax$ as $x \to 0$ uniformly on compact intervals in $a>0$. For $H \to 1,$ we have
\begin{align*}
\rho_H{\left(\frac {\tau} {1-H}\right)}&=\frac {(1+H) \,e^{H \tau/(1-H)}} {1+2H} 
\\&\qquad + \frac {e^{(1+H) \tau/(1-H)} \left((1-e^{-\tau/(1-H)})^{2 \,(1+H)} - 1\right)} {2 \,(1+2H)} + o(1)
\\&=\frac {(1+H) e^{-\tau}} {2} + o(1) \to e^{-\tau},
\end{align*}
where we have used $(1-x)^a - 1 + ax \sim a(a-1)x^2/2$ as $x \to 0$ uniformly on compact intervals in $a>0$.
\end{proof}

\begin{proof}[Proof of Theorem \ref{theo:molchankhoklov}]
Due to subadditivity, Slepian's lemma and the fact that $\IHL^H$ is a centered GSP with nonnegative correlations, the persistence exponent
\begin{equation}\label{eq:persistenceExp}
-\lim_{T \to \infty} \frac{1}{T} \ln \Prob{\left(\IHL_{\tau}^H < 0 \,\forall \tau \in {[0,T]}\right)}
\end{equation}
exists. Further, \eqref{eq:persistenceExp} equals $\theta_I(H)$. Indeed, note that
\begin{equation*}
\Prob{\left(\IHL_{\tau}^H < 0 \,\forall \tau \in {[0,T]}\right)} = \Prob{\left(I_t^H < 0 \,\forall t \in {[1,e^T]}\right)}
\end{equation*}
and that $\Prob{\left(I_t^H < 0 \,\forall t \in {[1,T]}\right)}$ has the same polynomial rate for $T \to \infty$ as $\Prob{\left(I_t^H < 1 \,\forall t \in {[0,T]}\right)},$ see \cite[Theorem~1]{Molchan2008}. \medskip

{\it The case $H\to 0$.} Observe that
\begin{align}
\frac {\theta_I(H)} {H} &= -\frac {1} {H} \lim_{T \to \infty} \frac {1}{T}\ln \Prob{\left(\IHL_{\tau}^H < 0 \,\forall \tau \in {[0,T]}\right)} \notag
\\&=-\frac {1} {H} \lim_{T \to \infty} \frac {1}{T/H} \ln \Prob{\left(\IHL_{\tau}^H < 0 \,\forall \tau \in {\left[0,\frac {T} {H}\right]}\right)} \notag
\\&=-\lim_{T \to \infty} \frac {1} {T} \ln \Prob{\left(\IHL_{\tau/H}^H < 0 \,\forall \tau \in {[0,T]}\right)}. \label{eq:thetabyH}
\end{align}
By Lemma \ref{lem:correlationX}, the correlation function $\tau \mapsto \rho_H{\left(\tau/H\right)}$ of $(\IHL_{\tau/H}^H)_{\tau\in\R}$ converges pointwise as $H \to 0$ to $\tau \mapsto e^{-\tau}$. This is the correlation function of a (scaled) Ornstein-Uhlenbeck process: If $(B_t)$ is a Brownian motion, then the process $e^{-\tau} B_{e^{2 \tau}}, \ \tau \in \R,$ which is the Lamperti transform of $(B_{t^2}),$ is a centered GSP with correlation function $\tau \mapsto e^{-\tau}$. Since $(B_t)$ has persistence exponent $1/2,$ the process $(B_{t^2})$ has persistence exponent $1$ and, by applying again \cite[Theorem~1]{Molchan2008}, this also holds for the corresponding Lamperti transform. 

So, as soon as we have also proved that the persistence exponents converge, the desired convergence $\theta_I(H)/H \to 1$ as $H \to 0$ follows. In order to achieve this, we want to apply Lemma \ref{lem:continuitypersistence}(a), i.e., we check the conditions \eqref{eq:18}--\eqref{eq:20} for the process $(\IHL_{\tau/H}^H)_{\tau \in \R}$ with the correlation function $\tau \mapsto \rho_H{\left(\tau/H\right)}$. Obviously, \eqref{eq:20} is fulfilled for the limiting correlation function $\tau \mapsto e^{-\tau}$.

To check \eqref{eq:18}, we write, by the binomial theorem,
\begin{align*}
\rho_H{\left(\frac {\tau} {H}\right)} &= \frac {(1+H) \,e^{-\tau}} {1+2H} 
\\&\qquad + \frac {e^{(1+H) \tau/H}} {2 \,(1+2H)} \sum_{k=2}^{\infty} (-1)^{k} \binom{2+2H} {k} e^{-k \tau/H} - \frac {e^{-(1+H) \tau/H}} {2 \,(1+2H)}
\\&\le \frac {(1+H) \,e^{-\tau}} {1+2H} + \frac {(1+H) \,e^{-(1-H) \tau/H}} {2 \,(1+2H)} \le \frac {3 \,e^{-\tau}} {2}
\end{align*}
for every $H < 1/2,$ since then it is easy to see that $(-1)^{k} \binom{2+2H} {k} \le 0$ for every $k \ge 3$.

Similarly for (5) we write, for every $H < 1/2,$ using the nonincreasing character of $\rho_H$ and the fact that $1-e^{-x}\leq x,$
\begin{align}
&\sup_{\tau \in [0,\eps]} \left(1-\rho_H{\left(\frac {\tau} {H}\right)}\right) = 1-\rho_H{\left(\frac {\eps} {H}\right)}=\rho_H{\left(\frac {0} {H}\right)}-\rho_H{\left(\frac {\eps} {H}\right)} \notag
\\&=\frac {(1+H) \left(1-e^{-\eps}\right)} {1+2H}  - \frac {1-e^{-(1+H) \eps/H}} {2 \,(1+2H)} \notag
\\&\qquad + \frac {\sum_{k=2}^{\infty} (-1)^{k} \binom{2+2H} {k} \left(1-e^{\eps (1+H-k)/H}\right)} {2 \,(1+2H)} \notag
\\&\le \eps +  \left(\frac {1+H} {2} - \frac {1} {2 (1+2H)}\right) \left(1-e^{-\eps/H}\right) \le \frac {5 \,\eps} {2},  \label{eq:check19iFBM}
\end{align}
for all $\eps > 0$. Note that in this case, the binomial theorem also holds for $\eps=0$ due to the fact that $2+2H > 0$ (see, e.g., \cite{Abel}). \medskip

{\it The case $H\to 1$.} Similarly to \eqref{eq:thetabyH},
\begin{equation*}
\frac {\theta_I(H)} {1-H} =-\lim_{T \to \infty} \frac {1} {T} \ln \Prob{\left(\IHL_{\frac {\tau} {1-H}}^H < 0 \,\forall \tau \in {[0,T]}\right)},
\end{equation*}
and by Lemma \ref{lem:correlationX}, the correlation function $\tau \mapsto \rho_H{\left(\tau/(1-H)\right)}$ of $(\IHL_{\tau/(1-H)}^H)$ converges pointwise as $H \to 1$ to $\tau \mapsto e^{-\tau}$. Again, this is the correlation function of the Ornstein-Uhlenbeck process with persistence exponent $1$. Applying Lemma \ref{lem:continuitypersistence}(a) for the process $(\IHL_{\tau/(1-H)}^H)_{\tau\in\R}$ completes the proof of the asymptotics, subject to checking the technical conditions. We have already seen that \eqref{eq:20} is fulfilled, since the limiting correlation function is the same as in the $H \to 0$ case. 

Next, we check condition \eqref{eq:18}. For $H > 1/2,$ we see that $(-1)^k \binom{2+2H} {k} < 0$ for $k=3$ and $(-1)^k \binom{2+2H} {k} \ge 0$ for $k \ge 4$. So, estimating again the negative terms by $0,$ we get
\begin{align*}
\rho_H{\left(\frac {\tau} {1-H}\right)} &\le \frac {(1+H) \,e^{-H \tau/(1-H)}} {1+2H} + \frac {(1+H) \,e^{-\tau}} {2} 
\\&\qquad + \frac {1} {2 (1+2H)} \sum_{k=4}^{\infty} (-1)^{k} \binom{2+2H} {k} e^{(1+H-k) \tau/(1-H)}
\\&\le \left(2 + \frac {1} {2\,(1+2H)} \sum_{k=4}^{\infty} (-1)^{k} \binom{2+2H} {k}\right) e^{-\tau} 
\\&= \left(\frac {5} {2} + (1+H) \left(\frac {H} {3} - \frac {1} {2}\right)\right) e^{-\tau} \le \frac {13 \,e^{-\tau}} {6}
\end{align*}
for $H \in (1/2,1),$ where in the last equality, again by the binomial theorem, we used the fact that $\sum_{k=0}^{\infty} (-1)^{k} \binom{2+2H} {k} = (1-1)^{2+2H} = 0$.

Further, condition \eqref{eq:19} can be verified similarly to \eqref{eq:check19iFBM}, since in this case
\begin{align*}
&\sup_{\tau \in [0,\eps]} \left(1-\rho_H{\left(\frac {\tau} {1-H}\right)}\right) = \rho_H{\left(\frac {0} {1-H}\right)}-\rho_H{\left(\frac {\eps} {1-H}\right)}
\\&=\frac {(1+H) \left(1-e^{-\eps}\right)} {2} + \frac {(1+H) \left(1-e^{-H \eps/(1-H)}\right)} {1+2H} - \frac {1-e^{-(1+H) \eps/(1-H)}} {2 \,(1+2H)}
\\&\qquad + \frac {1} {2 \,(1+2H)} \sum_{k=3}^{\infty} (-1)^{k} \binom{2+2H} {k} \left(1-e^{\eps (1+H-k)/(1-H)}\right)
\\&\le \eps + \left(\frac {1+H} {1+2H} - \frac {H (1+H)} {3}\right) \frac {\eps} {1-H}
\\&\qquad + \frac {H (1+H) (2H-1)} {12} \left(1-e^{-(3-H) \eps/(1-H)}\right) 
\\&\qquad - \frac {1} {2 \,(1+2H)} \left(1-e^{-(1+H) \eps/(1-H)}\right) + \sum_{k=5}^{\infty} (-1)^{k} \binom{2+2H} {k} \frac {(k-1-H) \,\eps} {1-H},
\end{align*}
where we used again $1-e^{-x} \le x$ and estimated $\eps \,H/(1-H) \le \eps/(1-H)$ as well as $\eps \,(2-H)/(1-H) \ge \eps/(1-H)$ in the last step. Note that
\begin{equation*}
\frac {1+H} {1+2H} - \frac {H (1+H)} {3} = \frac {(3+2H) (1+H) (1-H)} {3 (1+2H)};
\end{equation*}
that
\begin{align*}
&\frac {H (1+H) (2H-1)} {12} \left(1-e^{-(3-H) \eps/(1-H)}\right) - \frac {1} {2 \,(1+2H)} \left(1-e^{-(1+H) \eps/(1-H)}\right)
\\&\le \left(\frac {H (2H-1) (3-H)} {12} - \frac {1} {2 \,(1+2H)}\right) \left(1-e^{-(1+H) \eps/(1-H)}\right)
\\&=\frac {H (4H^2 - 1) (3-H) - 6} {12 \,(1+2H)} \left(1-e^{-(1+H) \eps/(1-H)}\right) < 0,
\end{align*}
for $H \in (0,1)$ and $\eps > 0$ since $(1-x^{3-H})/(1-x^{1+H}) < (3-H)/(1+H)$ for $x \in (0,1);$ and that, for $k \ge 5,$
\begin{align*}
(-1)^k \binom{2+2H} {k} \frac {k-1-H} {1-H} &= \frac {2H+2} {k-2} \cdot \frac {(k-1-H) (2H+1)} {k-1} \cdot \frac {2H} {k} \cdot \frac {2H-1} {1} 
\\&\qquad \times \frac {2-2H} {2 (1-H)} \cdot \frac{3-2H}{3} \cdots \frac {k-3-2H} {k-3}
\\&\le \frac {4} {k-2} \cdot 3 \cdot \frac {2} {k} \cdot 1 \cdots 1 = \frac {12} {k (k-2)}, 
\end{align*}
which is summable in $k$. Putting these facts together, we get, for every $\eta>1,$
\begin{align*}
&\limsup_{\eps \to 0} \left|{\ln \eps}\right|^{\eta} \sup_{H \in [1/2,1), \tau \in [0,\eps]} \left(1-\rho_H{\left(\frac {\tau} {1-H}\right)}\right)
\\&\le \limsup_{\eps \to 0} \left|{\ln \eps}\right|^{\eta} \eps \sup_{H \in [1/2,1)} \left(1+\frac {(3+2H) (1+H)} {3 (1+2H)} + \sum_{k=5}^{\infty} \frac {12} {k (k-2)}\right) 
\\&= 0 < \infty,
\end{align*}
showing \eqref{eq:19}.

Finally, the continuity of $\theta_I$ follows from that of $H \mapsto \rho_H(\tau)$ and Lemma \ref{lem:continuitypersistence}(a), since it is easily seen that conditions \eqref{eq:18}--\eqref{eq:20} are satisfied for the sequence $\tau \mapsto \rho_H(\tau), \ H \in [H_0-\delta, H_0+\delta],$ with fixed $H_0 \in (0,1),$ small $\delta > 0$ and $H \to H_0$.
\end{proof}

\section{Proofs for the case of Riemann-Liouville processes} \label{sec:proofsrl}
In this section, we prove Theorem \ref{theo:riemannliouville}. For this purpose, we need the following two lemmas on the correlation function of $\RHL^H$.

The correlation function $r_H(\tau)=\E{\left[\RHL_0^H \RHL_\tau^H\right]}$ can be found, e.g., in \cite[(12)]{Lim2015} and reads
\begin{equation}\label{eq:correlationRL}
r_H(\tau)=\frac {4H} {1+2H} \,e^{-\tau/2} {}_2 F_1{\left(1,\frac {1} {2}-H,\frac {3} {2}+H,e^{-\tau}\right)}
\end{equation}
with the standard notation for the Gauss' hypergeometric function. From this expression, it is clear that $r_H(\tau)$ is decreasing on $(0,\infty)$ for every $H \in (0,1/2)$. We also have the following
representation. 
\begin{lemma}\label{lem:correlationX2constanttau}
For all $\tau,H > 0,$
\begin{equation*}
e^{-\tau/2}-r_H(\tau) = \frac {1-2H} {1+2H} \,e^{-\tau/2} \,(1-e^{-\tau})^{2H} \,{}_2 F_1{\left(\frac {1} {2}+H,2H,\frac {3} {2}+H,e^{-\tau}\right)}.
\end{equation*}
\end{lemma}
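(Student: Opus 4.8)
The plan is to substitute $z=e^{-\tau}\in(0,1)$ and reduce the claim to a pure identity between Gauss hypergeometric functions. Dividing the asserted equality by $e^{-\tau/2}>0$ and inserting the closed form \eqref{eq:correlationRL} of $r_H$, the statement becomes
\[
(1+2H)-4H\,{}_2F_1\!\left(1,\tfrac12-H;\tfrac32+H;z\right)=(1-2H)\,(1-z)^{2H}\,{}_2F_1\!\left(\tfrac12+H,2H;\tfrac32+H;z\right).
\]
Both sides extend to analytic functions on the open unit disk, so it suffices to prove that the two power series coincide.

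First I would apply Euler's transformation ${}_2F_1(a,b;c;z)=(1-z)^{c-a-b}\,{}_2F_1(c-a,c-b;c;z)$ to the hypergeometric factor on the right, with $a=\tfrac12+H$, $b=2H$, $c=\tfrac32+H$, noting $c-a-b=1-2H$, $c-a=1$, $c-b=\tfrac32-H$. This turns $(1-z)^{2H}\,{}_2F_1(\tfrac12+H,2H;\tfrac32+H;z)$ into $(1-z)\,{}_2F_1(1,\tfrac32-H;\tfrac32+H;z)$, so with
\[
F(z):={}_2F_1\!\left(1,\tfrac12-H;\tfrac32+H;z\right)=\sum_{n\ge0}c_nz^n,\qquad c_n=\frac{(\tfrac12-H)_n}{(\tfrac32+H)_n},
\]
and $\widetilde F(z):={}_2F_1(1,\tfrac32-H;\tfrac32+H;z)$ — both genuine (not merely formal) power series because the numerator parameter is $1$ — the target identity reads $(1+2H)-4H\,F(z)=(1-2H)(1-z)\,\widetilde F(z)$. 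A short Pochhammer computation gives $(\tfrac32-H)_n=\frac{n+\frac12-H}{\frac12-H}(\tfrac12-H)_n$, hence $\widetilde F=F+\frac{1}{\frac12-H}\,zF'$; inserting this and using $1-2H=2(\tfrac12-H)$, the right-hand side becomes $(1-z)\big[(1-2H)F+2zF'\big]$, so the identity is equivalent to the first-order linear ODE
\[
2z(1-z)\,F'(z)+\big[\,1+2H-(1-2H)z\,\big]F(z)=1+2H .
\]

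It remains to verify this ODE, which is a one-line coefficient comparison: the constant term gives $(1+2H)c_0=1+2H$, and for $n\ge1$ the coefficient of $z^n$ on the left equals $(2n+1+2H)c_n-(2n-1-2H)c_{n-1}$, which vanishes because
\[
\frac{c_n}{c_{n-1}}=\frac{(\tfrac12-H)+(n-1)}{(\tfrac32+H)+(n-1)}=\frac{2n-1-2H}{2n+1+2H}.
\]
This establishes the ODE, hence the hypergeometric identity, hence the Lemma. I do not expect a genuine obstacle: the crux is recognizing that Euler's transformation collapses the right-hand side into a form directly comparable with the left-hand side, after which everything is routine bookkeeping with Pochhammer symbols. (One could equally well skip $\widetilde F$ and match the Taylor coefficients of the two sides of the displayed hypergeometric identity directly; the ODE merely organizes that computation.)
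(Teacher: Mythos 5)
Your proof is correct and essentially the same as the paper's: both apply Euler's transformation ${}_2F_1(a,b;c;z)=(1-z)^{c-a-b}{}_2F_1(c-a,c-b;c;z)$ to reduce the claim to the contiguous relation $4H\,{}_2F_1{\left(1,\tfrac12-H;\tfrac32+H;z\right)}+(1-2H)(1-z)\,{}_2F_1{\left(1,\tfrac32-H;\tfrac32+H;z\right)}=1+2H$, which is then verified by comparing power-series coefficients (your first-order ODE is just a repackaging of that coefficient comparison). The only cosmetic point is the division by $\tfrac12-H$ in the step $\widetilde F=F+\tfrac{1}{1/2-H}\,zF'$, which formally excludes $H=\tfrac12$; there the identity is trivial since $1-2H=0$ and $F\equiv 1$.
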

\begin{proof}
The result follows by applying Euler's transform of $_2 F_1$ and from the formula
\begin{equation*}
e^{-\tau/2}-r_H(\tau) = \frac {1-2H} {1+2H} \,e^{-\tau/2} \,(1-e^{-\tau}) \,{}_2 F_1{\left(1,\frac {3} {2}-H,\frac {3} {2}+H,e^{-\tau}\right)}.
\end{equation*}
To verify this formula note that after plugging in the definiton of $r_H,$ we are left with showing
\begin{align*}
&4H \,{}_2 F_1{\left(1,\frac {1} {2}-H,\frac {3} {2}+H,x\right)} 
\\&\qquad + (1-2H) (1-x) \,{}_2 F_1{\left(1,\frac {3} {2}-H,\frac {3} {2}+H,x\right)} = 1+2H
\end{align*}
for every $H>0$ and $x \in (0,1)$. But this contiguous relationship is easily obtained in equating the coefficients of $x^n$ in the two series. 
\end{proof}

Let us now analyze the behavior of the rescaled correlation $r_H(\tau/\gamma)$ with $\gamma=\gamma_H\to\infty$ as $H\to 0$.

\begin{lemma}\label{lem:correlationwithgamma}
Let $\gamma=\gamma_H$ be a function tending to infinity with $H\to 0$. If $\gamma^{-2H}\to c$ for $H\to 0$ and some $c \in [0,1],$ then $r_H(\tau/\gamma) \to 1-c$ for $H \to 0$ and all $\tau > 0$.
\end{lemma}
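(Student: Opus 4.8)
\medskip

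\noindent\emph{Proof strategy.}
The plan is to read the statement off the representation in Lemma~\ref{lem:correlationX2constanttau}. Replacing $\tau$ by $\tau/\gamma$ there yields
\begin{equation*}
e^{-\tau/(2\gamma)} - r_H\!\left(\tfrac{\tau}{\gamma}\right) = \frac{1-2H}{1+2H}\, e^{-\tau/(2\gamma)}\,\left(1-e^{-\tau/\gamma}\right)^{2H}\,{}_2 F_1\!\left(\tfrac12+H,\,2H,\,\tfrac32+H,\,e^{-\tau/\gamma}\right).
\end{equation*}
Since $\gamma=\gamma_H\to\infty$, we have $\tau/\gamma\to0$ as $H\to0$, so the factors $(1-2H)/(1+2H)$ and $e^{-\tau/(2\gamma)}$ both tend to $1$. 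It therefore suffices to prove that the product of the remaining two factors tends to $c$, for then $r_H(\tau/\gamma)=e^{-\tau/(2\gamma)}-(\cdots)\to1-c$. I would deal with the power factor and the hypergeometric factor separately.

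For the power factor, write
\begin{equation*}
\left(1-e^{-\tau/\gamma}\right)^{2H}=\tau^{2H}\cdot\gamma^{-2H}\cdot\left(\frac{1-e^{-\tau/\gamma}}{\tau/\gamma}\right)^{\!2H}.
\end{equation*}
Here $\tau^{2H}=e^{2H\ln\tau}\to1$; the quantity $(1-e^{-x})/x$ tends to $1$ and is bounded away from $0$ as $x\to0$, so raising it to the vanishing power $2H$ gives a factor tending to $1$; and $\gamma^{-2H}\to c$ by hypothesis (this also covers the case $c=0$). Hence $(1-e^{-\tau/\gamma})^{2H}\to c$.

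For the hypergeometric factor, set $f_H(z):={}_2 F_1(\tfrac12+H,2H,\tfrac32+H,z)$. For $H<\tfrac12$, which may be assumed since the claim concerns $H\to0$, all Pochhammer symbols in the defining series of $f_H$ are positive and the series converges at $z=1$, because $\tfrac32+H-(\tfrac12+H)-2H=1-2H>0$. Thus $f_H$ is nondecreasing on $[0,1]$, and Gauss's summation theorem gives, for all $z\in[0,1]$,
\begin{equation*}
1=f_H(0)\le f_H(z)\le f_H(1)=\frac{\Gamma(\tfrac32+H)\,\Gamma(1-2H)}{\Gamma(\tfrac32-H)}.
\end{equation*}
The right-hand side tends to $\Gamma(\tfrac32)/\Gamma(\tfrac32)=1$ as $H\to0$, so $f_H\to1$ uniformly on $[0,1]$; in particular $f_H(e^{-\tau/\gamma})\to1$. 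Multiplying the two limits yields $(1-e^{-\tau/\gamma})^{2H}f_H(e^{-\tau/\gamma})\to c$, completing the argument.

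I expect the delicate point to be exactly this last step: the argument $e^{-\tau/\gamma}$ of the hypergeometric function approaches the boundary $1$ while at the same time the parameter $b=2H$ degenerates to $0$, so a bare pointwise-in-$z$ limit of ${}_2 F_1$ would not obviously suffice — the monotonicity-plus-Gauss sandwich is what upgrades it to uniform convergence on $[0,1]$. Should one prefer to avoid Gauss's summation theorem, the same conclusion follows from the $z\to1$ connection formula for ${}_2 F_1$ together with the identities ${}_2 F_1(a,b,b,w)=(1-w)^{-a}$ and $\Gamma(2H-1)=\Gamma(2H)/(2H-1)$: these make the non-analytic factor $(1-e^{-\tau/\gamma})^{1-2H}\to0$ explicit and reduce the problem to elementary limits of Gamma factors.
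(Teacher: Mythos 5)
Your proof is correct, and its skeleton coincides with the paper's: both start from Lemma~\ref{lem:correlationX2constanttau} with $\tau$ replaced by $\tau/\gamma$, discard the prefactors $(1-2H)/(1+2H)$ and $e^{-\tau/(2\gamma)}$, and show that $(1-e^{-\tau/\gamma})^{2H}\to c$ exactly as you do. The only divergence is the hypergeometric factor: you sandwich it via monotonicity of the series in $z$ together with Gauss's summation theorem, $1\le {}_2F_1(\tfrac12+H,2H,\tfrac32+H,z)\le \Gamma(\tfrac32+H)\Gamma(1-2H)/\Gamma(\tfrac32-H)\to 1$, whereas the paper bounds the tail of the defining series term by term and lets the prefactor $1/\Gamma(2H)\to 0$ kill it, the sum over $n\ge 1$ staying bounded. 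Both arguments yield the uniform-in-$z$ convergence to $1$ that is genuinely needed here (as you note, a pointwise limit in $z$ would not suffice since $e^{-\tau/\gamma}\to 1$); your route has the small aesthetic advantage of reusing the Gauss evaluation at $z=1$ that the paper invokes anyway in the proof of Theorem~\ref{theo:riemannliouville}(b), while the paper's series bound avoids appealing to Gauss's theorem at this point. Your remark that $H<1/2$ ensures convergence at $z=1$ (since $1-2H>0$) is the right condition to check, so there is no gap.
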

\begin{proof}
By Lemma \ref{lem:correlationX2constanttau},
\begin{equation*}
e^{-\tau/(2\gamma)} - r_H{\left(\frac {\tau} {\gamma}\right)} \sim (1-e^{-\tau/\gamma})^{2H} {}_2 F_1{\left(\frac {1} {2}+H,2H,\frac {3} {2}+H,e^{-\tau/\gamma}\right)}
\end{equation*}
as $H \to 0$. Now $(1-e^{-\tau/\gamma})^{2H} \sim (\tau/\gamma)^{2H} \to c$ as $H \to 0$ and
\begin{align*}
&\left|{}_2 F_1{\left(\frac {1} {2}+H,2H,\frac {3} {2}+H,e^{-\tau/\gamma}\right)} - 1\right|
\\&=\frac {\Gamma(3/2+H)} {\Gamma(1/2+H) \,\Gamma(2H)} \sum_{n=1}^\infty \frac{\Gamma(n+1/2+H) \,\Gamma(n+2H)} {\Gamma(n+3/2+H)} \cdot \frac {e^{-n \tau/\gamma}} {n!} 
\\&=\frac {1+2H} {2 \Gamma(2H)} \sum_{n=1}^{\infty} \frac {\Gamma(n+2H)} {(n-1)!} \cdot \frac {e^{-n \tau/\gamma}} {(n+1/2+H) n}
\\&\le \frac {1} {\Gamma(2H)} \sum_{n=1}^{\infty} n^{-2} \to 0, \qquad H \to 0. \qedhere
\end{align*}
\end{proof}

\begin{proof}[Proof of Theorem \ref{theo:riemannliouville}]
Similarly to the proof of Theorem \ref{theo:molchankhoklov}, due to nonnegative correlations, the persistence exponent
\begin{equation*}
-\lim_{T \to \infty} \frac{1}{T} \ln \Prob{\left(\RHL_{\tau}^H < 0 \,\forall \tau \in {[0,T]}\right)}
\end{equation*}
exists and equals $\theta_R(H)$. Again, \cite[Theorem~1]{Molchan2008} can be applied to see that $\Prob{\left(R_t^H < 0 \,\forall t \in {[1,T]}\right)}$ and $\Prob{\left(R_t^H < 1 \,\forall t \in {[0,T]}\right)}$ have the same polynomial rate as $T \to \infty$. Note that, according to \cite[Corollary~3.1]{aurzadadereich}, a proper choice for $\varphi_T$ in \cite[Theorem~1]{Molchan2008} is given by $\varphi_T:=\varphi \equiv 0$ on $[0,1/2)$ and
\begin{equation*}
\varphi(t):=c \int_{1/2}^t (t-s)^{H-1/2} s^{-\eta} \dd s, \qquad t \ge \frac {1} {2},
\end{equation*}
for $\eta \in (1/2,1/2+H)$ and $c$ large enough such that $\varphi(t) \ge 1$ for all $t \ge 1$. Note that
\begin{align*}
\varphi(t)&=c \,t^{H+1/2-\eta} \int_{1/2}^t \left(1-\frac {s} {t}\right)^{H-1/2} \left(\frac {s} {t}\right)^{-\eta} \frac{\dd s}{t}
\\&=c \,t^{H+1/2-\eta} \int_{1/(2t)}^1 (1-u)^{H-1/2} u^{-\eta} \dd u,
\end{align*}
which is an increasing function in $t$ so that 
\begin{equation*}
1/c=\int_{1/2}^1 (1-s)^{H-1/2} s^{-\eta} \dd s
\end{equation*}
is a suitable choice.

Now, similarly to \eqref{eq:thetabyH}, for every $\gamma,$ we have
\begin{equation*}
\frac {\theta_R(H)} {\gamma} =-\lim_{T \to \infty} \frac {1} {T} \ln \Prob{\left(\RHL_{\tau/\gamma}^H < 0 \,\forall \tau \in {[0,T]}\right)}.
\end{equation*}
We next show $\theta_R(H)/\gamma \to \infty$ for any function $\gamma=\gamma_H$ with $\gamma \ll H^{-1},$ where $f(x) \ll g(x)$ means $\lim f(x)/g(x) = 0$. This proves part (a). 

Let $\gamma=\gamma_H$ be a function satisfying $\gamma \to \infty$ and $\gamma \ll H^{-1}$ as $H \to 0$. Since $\lim_{H \to 0} \gamma^{-2H} \ge \lim_{H \to 0} H^{2H} = 1$ and $\lim_{H \to 0} \gamma^{-2H} \le \lim_{H \to 0} 1^{-2H} = 1,$ it follows from Lemma~\ref{lem:correlationwithgamma} that $r_H(\tau/\gamma) \to 0$ for $H \to 0$ and all $\tau > 0$. To conclude the assertion $\theta_R(H)/\gamma \to \infty,$ we want to apply Lemma \ref{lem:continuitypersistence}(b) and thus have to check \eqref{eq:18} for the correlation function $\tau \mapsto r_H(\tau/\gamma)$. Indeed, one has, by \eqref{eq:correlationRL} and the series representation of the hypergeometric function, for every $\ell \in \N,$
\begin{align*}
&\int_0^\infty r_H{\left(\frac {\tau} {\ell \gamma}\right)} \dd \tau=\ell \gamma \int_0^{\infty} r_H(\tau) \dd \tau
\\&= \ell \gamma \,\frac {4H \,\Gamma(3/2+H)} {(1+2H) \,\Gamma(1/2-H)} \sum_{k=0}^{\infty} \frac {\Gamma(1/2-H+k)} {\Gamma(3/2+H+k)} \int_0^{\infty} e^{-\tau (k+1/2)} \dd \tau \notag
\\&\sim 2 \ell \gamma H \sum_{k=0}^{\infty} \left(k+\frac {1} {2}\right)^{-2} =: c \cdot \ell \gamma H, \qquad H \to 0,
\end{align*}
and thus, for every $\ell, L \in \N,$
\begin{align*}
\limsup_{H \to 0} \sum_{\tau=L}^{\infty} r_H{\left(\frac {\tau} {\ell \gamma}\right)} &\le \limsup_{H \to 0} \int_{L-1}^{\infty} r_H{\left(\frac {\tau} {\ell \gamma}\right)} \dd \tau 
\\&\le \limsup_{H \to 0} \int_{0}^{\infty} r_H{\left(\frac {\tau} {\ell \gamma}\right)} \dd \tau = c \,\ell \,\limsup_{H \to 0} \gamma H = 0,
\end{align*}
where we used the monotonicity and nonnegativity of $r_H$. 

Now, we prove part (b). We will show that 
\begin{equation*}
r_H(\tau) \ge (1-|\tau|^H)_+ = \E[S_0^{H/2} S_\tau^{H/2}]
\end{equation*}
for $H \in (0,1/2)$ and all $\tau \in \R,$ where $(S_\tau^H)$ is the so-called fractional Slepian's process (see \cite[Section~2.3]{Molchan2012}). Then Slepian's lemma implies that $\theta_R(H) \le \theta_S(H/2),$ where $\theta_S(H)$ denotes the persistence exponent of $(S_\tau^H),$ and the assertion follows by \cite[Proposition~2.9]{Molchan2012}. 

We have
\begin{align*}
&\frac {1-2H} {1+2H} \,{}_2 F_1{\left(\frac {1} {2}+H,2H,\frac {3} {2}+H,e^{-\tau}\right)} \le \frac {1-2H} {1+2H} \,{}_2 F_1{\left(\frac {1} {2}+H,2H,\frac {3} {2}+H,1\right)} 
\\&= \frac {1-2H} {1+2H} \cdot \frac {\Gamma(3/2+H) \,\Gamma(1-2H)} {\Gamma(3/2-H)} = \frac {\Gamma(1/2+H) \,\Gamma(1-2H)} {\Gamma(1/2-H)} 
\\&= \frac {\Gamma(1/2+H)} {\Gamma(1/2)} \cdot \frac {\Gamma(1-H)} {2^{2H}} \le 1, \qquad H \in {\left(0,\frac {1} {2}\right)},
\end{align*}
where we used the Legendre duplication formula in the last equality and the monotonicity of $\Gamma(\cdot)$ on $(1/2,1)$ as well as the fact that
\begin{align*}
\Gamma(1-H)&=\Gamma{\left(2H \cdot \frac {1} {2} + (1-2H) \cdot 1\right)} 
\\&\le \left(\Gamma{\left(\frac {1} {2}\right)}\right)^{2H} \cdot \left(\Gamma(1)\right)^{1-2H} = \pi^H \le 2^{2H}, \qquad 2H \in (0,1),
\end{align*}
(due to the logarithmic convexity of $\Gamma(\cdot)$) in the last inequality. Together with Lemma \ref{lem:correlationX2constanttau}, this gives
\begin{equation*}
r_H(\tau) \ge e^{-\tau/2} \left(1-\left(1-e^{-\tau}\right)^{2H}\right) \ge e^{-\tau/2} \left(1-\tau^{2H}\right)=\phi(\tau) \left(1-\tau^H\right)
\end{equation*}
for $\tau \ge 0,$ where $\phi(\tau):=e^{-\tau/2} (1+\tau^H).$ Now, $\varphi(\tau):=1+\tau^H-e^{\tau/2}$ satisfies $\varphi(0)=0,$ $\varphi(1)=2-\sqrt{e} > 0$ and
\begin{equation*}
\varphi''(\tau)=-H (1-H) \tau^{-(2-H)} - \frac {e^{\tau/2}} {4} < 0, \qquad \tau \ge 0,
\end{equation*}
which implies $\varphi(\tau) \ge 0$ and thus $\phi(\tau) \ge 1$ for $\tau \in [0,1]$. This shows $r_H(\tau) \ge (1-|\tau|^H)_+$ for $\tau \in [0,1]$ and due to the symmetry and the nonnegativity of $r_H$ even for all $\tau \in \R$.  

Finally, similarly to the proof of Theorem \ref{theo:molchankhoklov}, the continuity of $\theta_R$ follows from the continuity of $H \mapsto r_H(\tau)$ and Lemma \ref{lem:continuitypersistence}(a), since the sequence $\tau \mapsto r_H(\tau), \ H \in [H_0-\delta, H_0+\delta],$ with fixed $H_0 \in (0,\infty),$ small $\delta > 0$ and $H \to H_0$ fulfills conditions \eqref{eq:18}--\eqref{eq:20}. One checks easily \eqref{eq:18} and \eqref{eq:20}, while for checking \eqref{eq:19}, we note that
\begin{align*}
1-r_H(\eps) = 1- e^{-\eps/2} + e^{-\eps/2} - r_H(\eps) \le \frac {\eps} {2} + c_{H_0} \eps^{2 (H_0-\delta)}
\end{align*}
for suitable $c_{H_0}$ and small $\eps$ using Lemma \ref{lem:correlationX2constanttau} with $\tau$ replaced by $\eps$ and the fact that $1-e^{-x}\leq x$. 
\end{proof}

\noindent\textbf{Acknowledgments.} This work was supported by Deutsche
Forschungsgemeinschaft (DFG grant AU370/5). We would like to thank the anonymous referee for his/her valuable suggestions, in particular for simplifying the proofs of Lemma \ref{lem:correlationX2constanttau} and Lemma \ref{lem:correlationwithgamma}. Further, we would like to thank G. Molchan for providing the proof of Theorem \ref{theo:riemannliouville}(b), which improves the result $\limsup_{H \to 0} \theta_R(H) H^2 < \infty$ in the first version of our paper and, more importantly, gives a significantly shorter proof of this result.  

\nocite{*}
\bibliographystyle{plain}

\end{document}